\theoremstyle{plain}
\newtheorem{Prop}{Proposition}[section]
\newtheorem{theorem}[Prop]{Theorem}
\newtheorem{corollary}[Prop]{Corollary}
\newtheorem{lemma}[Prop]{Lemma}
\theoremstyle{definition}
\newtheorem{definition}[Prop]{Definition}
\theoremstyle{remark}
\newtheorem{remark}[Prop]{Remark}
\newcommand{\lec}{\operatorname{LEC}\nolimits}
\def\RR{{\mathbb R}}
\def\ZZ{{\mathbb Z}}
\def\UUU{{\mathcal U}}
\def\NNN{{\mathcal N}}
\def\III{{\mathcal I}}
\newcommand{\Rips}{\mathrm{Rips}}
\newcommand{\diam}{\operatorname{Diam}\nolimits}
\newcommand{\bx}{\operatorname{Box}\nolimits}
\newcommand{\rad}{\operatorname{Rad}\nolimits}
\newcommand{\St}{\operatorname{St}}
\newcommand{\Lk}{\operatorname{Lk}}
\newcommand{\Ind}{\operatorname{Ind}}
\numberwithin{equation}{section}
\title[Rips complexes of Integer lattices]
{Contractibility of the Rips complexes of Integer lattices via local domination}
\author{\v Ziga ~Virk}
\address{University of Ljubljana and Institute IMFM, Ljubljana, Slovenia}
\email{ziga.virk@fri.uni-lj.si}
\begin{document}

\maketitle
\begin{center}
\today
\end{center}

\begin{abstract}
We prove that for each positive integer $n$, the Rips complexes of the $n$-dimensional integer lattice in the $d_1$ metric (i.e., the Manhattan metric, also called the natural word metric in the Cayley graph) are contractible at scales above $n^2(2n-1)$, with the bounds arising from the Jung constants.  We introduce a new concept of locally dominated vertices in a simplicial complex, upon which our proof strategy is based. This allows us to deduce the contractibility of the Rips complexes from a local geometric condition called local crushing. In the case of the integer lattices in dimension $n$ and a fixed scale $r$, this condition entails the comparison of finitely many distances to conclude that the corresponding Rips complex is contractible. In particular, we are able to verify that for $n=1,2,3$, the Rips complex of the $n$-dimensional integer lattice at scale greater or equal to $n$ is contractible. We conjecture that the same proof strategy can be used to extend this result to all dimensions $n$.
\end{abstract}

\section{Introduction}

Given a metric space $X$ with distance $d$ and $r \geq 0$, the (closed) \textbf{Rips complex} $\Rips (X, r)$ is the abstract simplicial complex with the vertex set $X$, for which a finite $\sigma \subseteq X$ is a simplex iff $\diam(\sigma) \leq r$. Here, $\diam(\sigma) = \max_{x,y\in \sigma} d(x,y).$

The Rips complexes, sometimes referred to as Vietoris-Rips complexes, were originally introduced by Vietoris \cite{Viet}, \cite[p. 271]{Lef} (following \cite{ZV3} we reserve the name ``Vietoris complex'' for Vietoris's cover based construction of a simplicial complex). The construction was later rediscovered by Rips in the context of geometric group theory and used by Gromov \cite{GR}. Rips proved that the Rips complexes of hyperbolic groups are contractible for large scales $r$ \cite[Proposition III.$\Gamma$.3.23]{Bridson}, when considering a group equipped with the word metric, i.e., as a  Cayley graph. This turns out to be an important aspect in terms of group actions and has far reaching ramifications in geometric group theory. There are many variations and generalizations of this result. Some of these include \cite{BauerRoll} (from the computational perspective) and \cite{Zar1, Zar2}, which also incorporate Bestvina-Brady Morse theory. The motivating question for our work has been posed by an author of the latter two papers, Matthew Zaremsky: Are the Rips complexes of the free finitely generated Abelian groups (integer lattices in $d_1$ metric) contractible for large scales? Our main result confirms that this is indeed the case, and thus represents an extension of Rips's result to the ``highly'' non-hyperbolic free finitely generated Abelian groups.

While the aspect of geometric group theory is one of the main motivations to study the above question, it is not the only one. The second context in which our results play a prominent role is that of the reconstruction results, i.e., the question whether the Rips complex of a space (or a nearby sample in, say, Gromov-Hausdorff metric) at a reasonably small scale attains the homotopy type of the space itself. This question was first studied by Hausmann in \cite{Haus}, where he has used the Rips complexes to define a new cohomology theory. He proved that the reconstruction result holds for a large class of Riemannian manifolds. A simpler proof of his reconstruction result using Vietoris complexes and Dowker duality was recently given in \cite{ZV3}. Motivated by the followup question of Hausmann, Latschev later proved the reconstruction result for a Gromov-Hausdorff nearby sample for a large class of Riemannian manifolds. His result has later been generalized and extended, sometimes within the setting of computational geometry, in \cite{Att, Lem, Sush2, Sush1, BauerRoll}, with the results of \cite{Zar1, Zar2} also fitting into this setting despite the geometric group theoretic framework. Unfortunately, none of the tools used in these papers turned out to applicable in our setting of integer lattices. By scaling the integer lattice to a sufficiently dense set, our main result is a specific variant of a reconstruction results. For an approach in dimensions $2$ and $3$ using hyperconvex embeddings see \cite{QW}.

The third context in which our result can be positioned, is that of persistent homology \cite{EH}, within which the Rips complexes play a fundamental role in applications \cite{Bauer, MC}. One of its fundamental properties is stability, a version of which also holds for the Rips complexes: If metric spaces are $\delta$-close in the Gromov-Hausdorff distance, then the persistence diagrams (obtained via the Rips filtrations) are at distance at most $2\delta$, \cite{Cha2}. It has led, for example, to a counterexample of Hausmann conjecture from \cite{Haus} on the monotonicity of the connectivity of Rips complexes, see \cite{ZVCounterex}. The stability theorem implies that the persistent homology of the Rips complexes of integer lattices is close to that of the Euclidean space in $d_1$ metric (which has trivial persistent homology), but not necessarily the same. Our main result implies that persistent homology (and the homotopy type of Rips complexes) of integer lattices are more than stable, they are rigid at large scales: at any sufficiently large scale they reconstruct the homology (and homotopy type) of the Rips complex of the Euclidean space at that scale (which is homotopically trivial). In contrast to the reconstruction results, which are a specific type of rigidity results holding for reasonably small scales, this property holds for sufficiently large scales. Outside the realm of the reconstruction results, the rigidity of persistent homology (or the homotopy type) has only been observed in a few settings: for flag complexes of circular graphs, including the homotopy type of Rips complexes of $S^1$ and certain ellipses \cite{AA, Ad5}, for $1$-dimensional persistence of geodesic spaces \cite{ZV1}, and for connectivity of the Rips complexes of spheres above small and medium scales \cite{ABV}.

\textbf{Main results.} Our main results state that the Rips complexes of the integer lattices in the $d_1$ metric are contractible for large scales. For dimensions $n=1, 2, 3$ we obtain the optimal (see Section \ref{Sect:Conclusion} for a discussion and a conjecture on the optimality) contractibility bound $r \geq n$. For larger dimensions, we obtain a bound arising from the Jung constants.
\begin{itemize}
 \item Theorems \ref{Thm:Rips12} and \ref{Thm:Main2}: Fix $n\in \{1,2,3\}$. Then for  any $r \geq n$,  $\Rips(\ZZ^n, r)$ is contractible, when using the $d_1$ (Manhattan) metric.
  \item Theorem \ref{Thm:Main1}: Fix a positive integer $n$.  Then for  any $r \geq  n^2 (2n-1)$, $\Rips(\ZZ^n, r)$ is contractible, when using the $d_1$ (Manhattan) metric. 
\end{itemize}

\textbf{A comment on other $d_p$ metrics.} For a definition of $d_p$ metrics for $p\in [1,\infty]$ see the beginning of Section \ref{Sect:Conclusion}. For example, $d_2$ is the Euclidean metric. In \cite[Example 4.7]{Zar1}, Matt Zaremsky has proved that $\Rips\left((\ZZ^n, d_2), t\right)$ is contractible for all $t > \sqrt{n}/(2 - \sqrt{3})$ using the Bestvina-Brady discrete Morse theory. He has later informed us that a similar argument also seems to work for $d_{1 + \varepsilon}$ metrics for $\varepsilon > 0$, although the argument hasn't been written down by the time of the writing. For this reason, the present paper focuses on the $d_1$ metric, and only comments on why the same arguments hold in other $d_p$ metrics in Section \ref{Sect:Conclusion}.

\textbf{Main tool.}
As our main tool we introduce a new concept: local domination of a vertex in a simplicial complex, see Definition \ref{Def:LocalDom}. Local domination is a generalization of the classical domination and allows us to remove a locally dominated (but not necessarily dominated) vertex from a simplicial complex without changing the homotopy type. As a novel tool in combinatorial topology it is of independent interest. While domination suffices to prove our main results in dimensions $1$ and $2$, higher dimensional lattices require the use of local domination. The metric counterpart of local domination which allows for a transition to Rips complexes is called a local crushing. It is a generalization of Hausmann's crushing \cite{Haus}.

The structure of the paper is as follows. 
\begin{description}
\item[Section \ref{Sect:Prelim}] we provide preliminaries.  
\item[Section \ref{Sect:LocDomCrush}] we demonstrate how domination is used to prove our first main result about contractibility in dimensions $1$ and $2$, see Theorem \ref{Thm:Rips12}. We then discuss why the approach fails in higher dimensions (Remark \ref{Rem:Rips12}) and introduce local domination (Definition \ref{Def:LocalDom} and Theorem \ref{Thm:LocalDom}), along with its metric counterpart aimed at Rips complexes, local crushing (Definition \ref{Def:LocCrush} and Theorem \ref{Thm:Crush}).
\item[Section \ref{Sect:LEC}] We introduce a variant of the local crushing in our context, the Local Euclidean crushing property $\lec$ (Definition \ref{Def:LEC}, Theorem \ref{Thm:JungBounds}). 
\item[Section \ref{Sect:MainContract}] We prove our main results on the contractibility of the Rips complexes of lattices in dimension $3$ (Theorem \ref{Thm:Main2}) and higher (Theorem \ref{Thm:Main1}) using $\lec$.
\item[Section \ref{Sect:Conclusion}] We discuss why our approach works for all $d_p$ metrics. We conjecture that our proof strategy can be used to prove optimal contractibility bounds, and comment on why $\Rips(\ZZ^n,r)$ is not contractible for $r<n$.
\end{description}

\section{Preliminaries}
\label{Sect:Prelim}

Let $n$ be a non-negative integer. The point $(0, 0, \ldots, 0)\in \RR^n$ will be denoted by $0^n$. Throughout the paper, the distance $d$ on $\RR^n$ will always be the $d_1$ distance (also called the Manhattan distance) unless stated otherwise:
$$
d_1\left( (a_1, a_2, \ldots, a_n), (b_1, b_2, \ldots, b_n) \right) = \sum_{i=1}^n |a_i - b_i|.
$$
The one point space is denoted by $\bullet$. The notation $ X \simeq \bullet$ means $X$ is contractible. 
An (abstract) simplicial complex $K$ is a subset-closed collection of non-empty subsets of its vertex set. The relation of being a subcomplex is denoted by $\leq$. The $n^{\textrm{th}}$ skeleton of $K$ will be denoted by $K^{(n)} \leq K$. The vertex set of $K$ is $V(K)= K^{(0)}$. Given a vertex $a \in V(K)$, its (closed) star is $\St_K(a)=\{\sigma \in K \mid \{a\} \cup \sigma \in K \}\leq K$ and its link is $\Lk_K(a)=\{\sigma \in \St_K(a) \mid a\notin \sigma\} \leq \St_K (a)$. A simplicial complex $K$ is a \textbf{flag complex} if the following condition holds: $\sigma \subseteq V(K)$ is a simplex in $K$ iff each pair contained in $\sigma$ is a simplex in $K$. Given $A \subseteq V(K)$, the induced subcomplex is defined as $\Ind_K (A)=\{\sigma \in K \mid \sigma \subseteq A\} \leq K$. Given a subcomplex $K' \leq K$, the induced subcomplex is $\Ind_{K}(K') = \Ind_K(V(K')) \geq K'.$ Clearly, each Rips complex is a flag complex.

\begin{definition}
\label{Def:Dom}
 Let $K$ be a simplicial complex and $a,b\in V(K)$. A vertex $a$ is \textbf{dominated} by the vertex $b \neq a$ if for each $\sigma \in K: a\in \sigma \implies \{b\} \cup \sigma \in K$.
\end{definition}

\section{Local domination and local crushing}
\label{Sect:LocDomCrush}

\begin{definition}
\label{Def:LocalDom}
 Let $K$ be a simplicial complex and $a\in V(K)$ a vertex. A vertex $a$ is \textbf{locally dominated} in $K$ if there exists a simplex $L_a \in K$ not containing $a$, such that the following condition holds: 
 for each $\sigma \in K$ with $a \in \sigma$, there exists $b_\sigma \in L_a$, such that $ \sigma \cup \{b_\sigma \} \in K$. In particular, $a$ is not isolated.
\end{definition}

Intuitively, $a$ is locally dominated if there exists a simplex $L_a$ in $K$ such that every simplex in the star of $a$ is joinable to a vertex of $L_a$.
Local domination is a combinatorially-local variant of the concept of domination. If $L_a$ can be chosen to be a vertex in Definition \ref{Def:LocalDom}, the corresponding local domination is the standard domination,  see Figure \ref{Fig3}.

\begin{figure}[htbp]
\begin{center}
\includegraphics[scale=1.2]{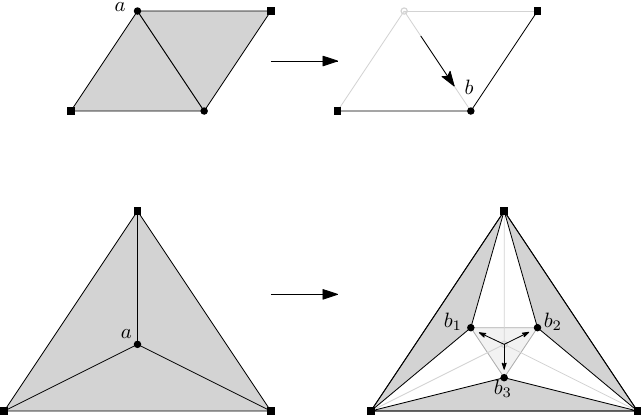}
\caption{Domination on top (Definition \ref{Def:Dom}) and local domination at the bottom (Definition \ref{Def:LocalDom}). 
Vertex $a$ in the top row being dominated means there exists a vertex $b$ contained in each maximal simplex containing $a$. Thus ``pushing'' $a$ to $b$ is a homotopy equivalence. 
Vertex $a$ in the bottom row being locally dominated means we can choose a designated $b_i$ in each maximal simplex containing $a$ (in our figure, the maximal simplices are the three shaded tetrahedra with the points $b_i$ hidden behind the front faces on the left side), so that the collection of the vertices $b_i$ forms a simplex. Theorem \ref{Thm:LocalDom} shows that, roughly speaking, ``spreading'' $a$ towards all $b_i$ is a homotopy equivalence, i.e., that the non-shaded area on the right can be filled in by some simplices. }
\label{Fig3}
\end{center}
\end{figure}

The following results will motivate local domination. It turns out that domination is sufficient to prove contractibility of Rips complexes of lattices in dimension $1$ and $2$ (Theorem \ref{Thm:Rips12}), but falls short in higher dimensions (Remark \ref{Rem:Rips12}).

\begin{theorem}
 \label{Thm:Rips12}
Let $n \in \{1,2\}$. Then for each $r \geq n$, $\Rips(\ZZ^n,r) \simeq \bullet$.
\end{theorem}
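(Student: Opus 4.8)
The plan is to reduce to finite sub‑lattices and then collapse each of them to a point by iteratively deleting dominated vertices, in the spirit announced in Section~\ref{Sect:LocDomCrush}.

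\emph{Step 1: reduction to boxes.} Set $B_N=\bigl([-N,N]\cap\ZZ\bigr)^{\,n}$. Every finite simplex of $\Rips(\ZZ^n,r)$ has all of its vertices in some $B_N$, so $\Rips(\ZZ^n,r)=\bigcup_{N\ge 0}\Rips(B_N,r)$ is an increasing union of subcomplexes; since homotopy groups commute with such sequential unions of CW complexes, it is enough, by Whitehead's theorem, to show that each $\Rips(B_N,r)$ is contractible. I would prove this by repeatedly invoking the classical fact — the special case of Theorem~\ref{Thm:LocalDom} in which the dominating simplex is a vertex — that deleting a dominated vertex (Definition~\ref{Def:Dom}) from a finite simplicial complex is a deformation retraction onto the induced subcomplex on the remaining vertices.

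\emph{Step 2: peeling a finite box.} Delete the vertices of $B_N$ in column‑major order: the whole column $\{-N\}\times\bigl([-N,N]\cap\ZZ\bigr)$ from bottom to top, then the next column, and so on, stopping just before the last vertex $(N,\dots,N)$. Suppose $v=(x_0,y_0)$ is the vertex currently being deleted; then the surviving region is $R=\{(x',y')\in B_N:x'\ge x_0+1\}\cup\{(x_0,y'):y'\ge y_0\}$ (for $n=1$ simply $[x_0,N]\cap\ZZ$). I claim $v$ is dominated in $\Rips(R,r)$: by $w=(x_0+1,y_0+1)$ if $x_0<N$ and $y_0<N$; by $w=(x_0+1,N)$ if $x_0<N$ and $y_0=N$; and by $w=(N,y_0+1)$ if $x_0=N$ (then necessarily $y_0<N$); for $n=1$, $v=x_0$ is dominated by $x_0+1$. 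In each case $w\in R$, and since $\Rips(R,r)$ is a flag complex it suffices to check $d(w,u)\le r$ for every $u\in R$ in the $d_1$‑ball of radius $r$ about $v$. Writing $u=(x_0+a,y_0+b)$ with $a\ge 0$ — and $b\ge 0$ when $a=0$, because $u\in R$ — the first case gives $d(w,u)=|a-1|+|b-1|$, and a short case analysis against $a+|b|=d(v,u)\le r$ yields $d(w,u)\le r$; the only subcase needing a hypothesis on $r$ is $u=v$, where $d(w,v)=2$, so $r\ge 2=n$ is exactly what is required. The remaining cases (and the case $n=1$) are essentially one‑dimensional: there $d(w,u)\le\max\{1,\,d(v,u)-1\}\le r$, using only $r\ge 1$. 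Once the column $\{x_0\}\times\bigl([-N,N]\cap\ZZ\bigr)$ is peeled, $R$ is again a box $\bigl([-N+1,N]\cap\ZZ\bigr)\times\bigl([-N,N]\cap\ZZ\bigr)$, so induction on the number of columns finishes the collapse of $\Rips(B_N,r)$ to the point $(N,\dots,N)$.

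\emph{Main obstacle.} The content lies in Step~2: one must produce, for every boundary vertex of the current region, a \emph{single} neighbour that dominates it, and this is exactly where the hypothesis $r\ge n$ is tight. The same recipe breaks for $n\ge 3$ — e.g.\ the corner of a box in $\ZZ^3$ at scale $r=3$ is dominated neither by $(1,1,1)$ nor by any other vertex — which is precisely why local domination must be introduced in higher dimensions (cf.\ Remark~\ref{Rem:Rips12}). Step~1, by contrast, is routine, requiring only that homotopy groups commute with increasing unions of subcomplexes.
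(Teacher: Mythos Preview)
Your proof is correct and follows essentially the same approach as the paper's: reduce via Whitehead to finite boxes, then peel off dominated vertices one at a time, with the generic dominating vertex being $v+(1,1)$. The only cosmetic difference is that you remove columns bottom-to-top while the paper removes rows left-to-right (with boundary cases $v+(1,0)$ resp.\ $v+(0,1)$), which are interchangeable by the $x\leftrightarrow y$ symmetry; your case analysis is also a bit more explicit than the paper's path-diverting argument, but the content is the same.
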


\begin{proof}
 By the Whitehead Theorem it suffices to show that all the homotopy groups are trivial. As any homotopy class is contained in the Rips complex on finitely many points, it suffices to show that for each positive integer $M $, $\Rips(Z^n_M, r) \simeq \bullet$, where $Z_M = \{0, 1,2,  \ldots, M\}$. 
 
 \textbf{n=1}: First, note that $0$ is dominated by $1$ in $Z_M$ as $r \geq 1$. Removing $0$ thus preserves the homotopy type. We proceed by induction (see Figure \ref{Fig1}) removing the left-most point until we reach the one-vertex simplicial complex $\Rips(\{M\},r)$.
 
 \textbf{n=2}: The proof is similar to the case $n=1$  in the sense that we are inductively removing dominated points from $Z^2_M$ in a certain order. Starting in the lowest row, we are removing the vertices from left to the right (see Figure \ref{Fig2}). Assume that $v$ is the leftmost vertex in the lowest row of the (potentially reduced) $Z^2_M$, and let the $y$ coordinate of $v$ be below $M$:
\begin{enumerate} 
 \item If $v$ is not the last vertex in the row, then $v$ is dominated by the vertex  $v + (1,1)$, and can thus be removed. Observe that the $d_1$ distance is the same as the induced path distance on the indicated grid on Figure \ref{Fig2}: at any scale, the shortest path $\alpha$ from a point $w\neq v$ to $v$ passes either through $v + (1,0)$ or through $v + (0,1)$ just before reaching $v$, and so we obtain a  path of the same or shorter length from $w$ to $v + (1,1)$ by diverting the last segment of $\alpha$ towards $v + (1,1)$.
 \item If a vertex $v$ is the last vertex in the row, then $v$ is dominated by the vertex  $v + (0,1)$, and can thus be removed. 
\end{enumerate}
 When the reduction reaches $Z_M \times \{M\}$ we refer to the case $n=1$ to conclude the contractibility.

Assumption $r \geq 2$ was used in two places. First, to assure that the pairs of vertices $(v, v + (1,1))$ and $(v, v + (0,1))$ form an edge. Second, to invoke case $n=1$, where $r \geq 1$ is required.
\end{proof}

\begin{figure}[htbp]
\begin{center}
\includegraphics[scale=.9]{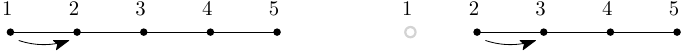}
\caption{The proof of Theorem \ref{Thm:Rips12} for $n=1$: we inductively remove the leftmost vertex, which is dominated by the second leftmost vertex.}
\label{Fig1}
\end{center}
\end{figure}

\begin{figure}[htbp]
\begin{center}
\includegraphics[scale=1.1]{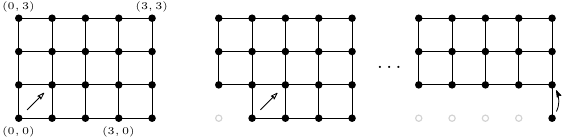}
\caption{The proof of Theorem \ref{Thm:Rips12} for $n=2$: we inductively remove the lexicographically smallest vertex, which is dominated by the vertex indicated by the arrow.}
\label{Fig2}
\end{center}
\end{figure}

\begin{remark}
 \label{Rem:Rips12}
 The vertex $0^3\in Z=\{0,1,2,3\}^3$ is not dominated in $\Rips(Z,3)$. While each of the three vertices $(3,0,0), (0,3,0), (0,0,3)$ forms an edge with $0^3$ (i.e., are at distance at most $3$), it is easy to see that there is no vertex on $Z$, other than $0^3$, that would be at distance at most $3$ from all three of them. Thus, domination cannot be used and we turn our attention to the more general local domination.
\end{remark}

The Theorem \ref{Thm:LocalDom} establishes the crucial property that removing a locally dominated (and in a special case, a dominated) vertex is a homotopy equivalence. 

\begin{theorem} [Local domination]
\label{Thm:LocalDom}
Let $K$ be a simplicial complex and $a\in V(K)$ a locally dominated vertex in $K$. If $K$ is a flag complex, then $\Ind_K(V(K) \setminus \{a\}) \simeq K$.
\end{theorem}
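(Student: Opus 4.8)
The goal is to show that deleting a locally dominated vertex $a$ from a flag complex $K$ yields a homotopy equivalent complex. Let me set up notation: let $L_a\in K$ be the witnessing simplex (with $a\notin L_a$), and let $K' = \Ind_K(V(K)\setminus\{a\})$. The natural strategy is to exhibit a deformation retraction of $|K|$ onto $|K'|$, or equivalently to show that the pair $(K, K')$ collapses / that the inclusion $K' \hookrightarrow K$ admits a homotopy inverse. The cleanest route I would try first is a Nerve-lemma / cover argument: realize $|K|$ as the union $|K'| \cup |\St_K(a)|$, with intersection $|\St_K(a)| \cap |K'| = |\Lk_K(a)|$ (using that $K$ is flag, $\St_K(a)$ is itself a flag complex and $\Lk_K(a)$ is the induced subcomplex on its non-$a$ vertices). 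Since $|\St_K(a)|$ is a cone with apex $a$, it is contractible; by a Mayer--Vietoris / van Kampen style gluing, $|K|\simeq |K'|$ will follow as soon as I show the inclusion $|\Lk_K(a)| \hookrightarrow |K'|$ is a homotopy equivalence — in fact it suffices to show $|\Lk_K(a)|$ is contractible, OR more robustly that it is "absorbed" into something contractible inside $K'$. This is where local domination enters.

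**Using local domination.** The key claim I would isolate is: $\Lk_K(a)$ is contained in (and deformation retracts within $K'$ onto) the simplex $L_a$, or at least that $|\Lk_K(a)| \cup |L_a|$ is contractible and sits nicely in $K'$. Concretely: for every simplex $\sigma \in \St_K(a)$, the definition gives $b_\sigma \in L_a$ with $\sigma\cup\{b_\sigma\}\in K$. Taking $\sigma$ to range over faces of the star, and using flagness, I want to conclude that $\Lk_K(a) * \{b\}$-type joins exist enough to build a contraction. More precisely, I would try to show that the subcomplex $\Lk_K(a) \cup L_a \cup \{\tau \cup \{b_\sigma\} : \ldots\}$ inside $K'$ is a cone-like (or at least collapsible) object, using flagness to promote the pairwise joinability conditions $\sigma\cup\{b_\sigma\}\in K$ into actual simplices. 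The homotopy $|K|\to|K|$ that "spreads $a$ toward $L_a$" described informally in Figure \ref{Fig3} should be made rigorous by: first pushing $|\St_K(a)|$ (a cone) into $|\Lk_K(a)| \cup |L_a| \cup (\text{mediating simplices})$, which lives entirely in $K'$; then observing this is an honest homotopy of $|K|$ rel $|K'|$.

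**Execution.** I would carry it out in this order. (1) Show $\St_K(a) = (\{a\}\cup\text{nothing}) * \Lk_K(a)$ honestly, i.e. $\St_K(a)$ is the cone $a * \Lk_K(a)$, using flagness; this gives $|\St_K(a)|\simeq\bullet$ and pins down the intersection with $|K'|$ as $|\Lk_K(a)|$. (2) Prove the absorption lemma: there is a simplicial (or at least continuous) map $\St_K(a) \to K'$ that restricts to the identity on $\Lk_K(a)$ and lands in a contractible subcomplex of $K'$ — this is essentially a simplicial map $\St_K(a)\to K'$ extending $\mathrm{id}_{\Lk_K(a)}$ by $a\mapsto$ (something in $L_a$), well-defined on each maximal simplex because $\sigma\cup\{b_\sigma\}\in K$ and $K$ is flag so the image set spans a simplex. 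The subtlety is consistency: different maximal simplices $\sigma$ may demand different $b_\sigma$, so the map $a\mapsto b_\sigma$ is not globally a single vertex — this is exactly why the naive "push $a$ to $b$" fails and why we only get a map into $K'$ after passing to a subdivision or by choosing the target to be the join over all relevant $b_\sigma$'s, which is a genuine simplex of $K'$ precisely by flagness plus local domination. (3) Conclude via the standard fact that if $X = A\cup B$ with $A,B$ subcomplexes, $A\cap B = C$, $B$ contractible, and the inclusion $C\hookrightarrow A$ is a homotopy equivalence (here $A = K'$, $B = \St_K(a)$, $C=\Lk_K(a)$, and the absorption lemma gives a homotopy inverse), then $X\simeq A$.

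**Main obstacle.** The hard part is step (2): turning the pointwise, simplex-by-simplex condition of Definition \ref{Def:LocalDom} into a globally consistent homotopy, since the chosen $b_\sigma$ genuinely varies. I expect the resolution to be: do not try to retract $|K|$ directly onto $|K'|$ by a vertex map, but instead show that the pair $(K,K')$ is such that $K$ simplicially collapses onto a complex homotopy equivalent to $K'$, or use the nerve of the cover $\{\St_K(b) : b \in L_a\} \cup \{K'\}$ of $|K|$ — each $\St_K(b)$ is a contractible cone, their common refinement structure is controlled by $L_a$ being a simplex (so all finite intersections of the $\St_K(b)$, $b\in L_a$, contain $L_a$ and are contractible), and every simplex of $\St_K(a)$ is covered because of local domination. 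Verifying the nerve lemma hypotheses (all nonempty finite intersections contractible) is then the concrete technical core, and flagness of $K$ is what makes those intersections well-behaved.
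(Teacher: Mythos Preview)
Your setup is correct and in fact contains the paper's entire strategy in one clause: you note that it suffices to show $|\Lk_K(a)|$ is contractible (this is exactly the paper's Lemma~\ref{Lem:LocalDom}, the gluing lemma). But then you abandon this route for alternatives that do not quite work.

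In your step (3) the gluing hypothesis is pointed the wrong way. With $X=A\cup B$, $A=K'$, $B=\St_K(a)$, $C=\Lk_K(a)$, the gluing lemma says that $A\hookrightarrow X$ is a homotopy equivalence provided $C\hookrightarrow B$ is one; since $B$ is a cone, that is equivalent to $C\simeq\bullet$. Your condition ``$C\hookrightarrow A$ is a homotopy equivalence'' would instead give $B\hookrightarrow X$ a homotopy equivalence, i.e.\ $K$ contractible, which is far too strong. Relatedly, the existence of an ``absorption'' map $\St_K(a)\to K'$ extending the identity on $\Lk_K(a)$ only says that $\Lk_K(a)\hookrightarrow K'$ is null-homotopic, and that alone does not force $K\simeq K'$. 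Your final nerve suggestion, covering $|K|$ by $\{\St_K(b):b\in L_a\}\cup\{K'\}$, also stalls: $K'$ is a cover element but is not contractible, so the good-cover hypothesis fails.

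The paper simply commits to proving $\Lk_K(a)\simeq\bullet$ and does it with a nerve argument applied to $\Lk_K(a)$ itself (not to $K$). The cover is $\UUU=\{M_\sigma\}_{\sigma\in\Lk_K(a)}$ with $M_\sigma=\Ind_{\Lk_K(a)}(\sigma\cup L_a)$. Every $M_\sigma$ contains $L_a$, so the nerve is a full simplex. For the good-cover condition, any finite intersection is $\Ind_{\Lk_K(a)}\big(L_a\cup\bigcap_i\sigma_i\big)$; if $\bigcap_i\sigma_i\neq\emptyset$ then local domination supplies $b\in L_a$ with $\big(\bigcap_i\sigma_i\big)\cup\{b\}\in K$, and flagness makes the intersection a cone with apex $b$; if $\bigcap_i\sigma_i=\emptyset$ the intersection is just $L_a$. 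This is the missing concrete mechanism for turning the simplex-by-simplex choices $b_\sigma$ into a global contraction.
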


In the proof of Theorem \ref{Thm:LocalDom} we will use the following well known lemma, see for example \cite[Gluing Lemma 10.3]{Bjor}, or recent uses in \cite{Ziqin, Anu}.

\begin{lemma}
 \label{Lem:LocalDom}
 Let $K$ be a flag simplicial complex and $a\in V(K)$. If $\Lk_K(a) \simeq \bullet$, then $K \simeq \Ind_K(V(K) \setminus \{a\})$.
\end{lemma}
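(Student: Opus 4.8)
The plan is to exhibit $K$ as a union of two subcomplexes — the closed star of $a$, which is visibly contractible, and the target complex $L := \Ind_K(V(K)\setminus\{a\})$ — whose intersection is $\Lk_K(a)$, and then to invoke the cited Gluing Lemma. The first step is the purely combinatorial identity
$$K = L \cup \St_K(a), \qquad L \cap \St_K(a) = \Lk_K(a).$$
For the union: a simplex $\sigma \in K$ either omits $a$, so $\sigma \subseteq V(K)\setminus\{a\}$ and $\sigma \in L$, or contains $a$, so $\sigma \cup\{a\} = \sigma \in K$ and $\sigma \in \St_K(a)$. For the intersection: $\sigma \in L \cap \St_K(a)$ means exactly $a \notin \sigma$ and $\sigma \cup \{a\} \in K$, which is the defining condition for $\sigma \in \Lk_K(a)$.

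The second step is to note that $\St_K(a)$ is a cone with apex $a$: for any $\sigma \in \St_K(a)$ we have $\sigma \cup \{a\} \in K$ and hence $\sigma \cup \{a\} \in \St_K(a)$, so $\St_K(a)$ is closed under adjoining $a$. Thus $\St_K(a) \simeq \bullet$. Combining this with the hypothesis $\Lk_K(a) \simeq \bullet$, the inclusion $\Lk_K(a) \hookrightarrow \St_K(a)$ is a map between contractible complexes, hence a homotopy equivalence.

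The third step applies the Gluing Lemma \cite[Gluing Lemma 10.3]{Bjor}. Since $L$ and $\St_K(a)$ are subcomplexes of $K$ with union $K$ and intersection the subcomplex $\Lk_K(a)$, and inclusions of subcomplexes are cofibrations, the square with corners $\Lk_K(a)$, $\St_K(a)$, $L$, $K$ is a homotopy pushout. Because the map $\Lk_K(a) \hookrightarrow \St_K(a)$ is a homotopy equivalence, so is the map $L \hookrightarrow K$; that is, $K \simeq \Ind_K(V(K)\setminus\{a\})$, as desired.

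The argument is formal, so I do not expect a genuine obstacle; the only points requiring a little care are the two set identities above — in particular that it is the \emph{closed} star that appears, which is what makes $\St_K(a)$ a cone — and citing the homotopy-pushout form of the Gluing Lemma rather than a bare Mayer–Vietoris statement. I also note that flagness of $K$ is not actually used in this lemma; it is inherited from the surrounding context and will matter only in the application in Theorem \ref{Thm:LocalDom}.
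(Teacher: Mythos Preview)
Your proposal is correct and is exactly the argument the paper has in mind: the paper does not give its own proof of this lemma but simply cites it as well known via \cite[Gluing Lemma 10.3]{Bjor}, and you have spelled out precisely how that Gluing Lemma applies via the decomposition $K = \Ind_K(V(K)\setminus\{a\}) \cup \St_K(a)$ with intersection $\Lk_K(a)$. Your remark that flagness is not actually used here is also correct; the hypothesis is carried along only because it is needed downstream in the proof of Theorem~\ref{Thm:LocalDom}.
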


\begin{proof}
[Proof of Theorem \ref{Thm:LocalDom}]
 According to Lemma \ref{Lem:LocalDom} it suffices to show that $\Lk_K(a) \simeq \bullet$. Choose a simplex $L_a$ according to Definition \ref{Def:LocalDom}. Define a cover $\UUU$ of $\Lk_K(a)$ by subcomplexes $\UUU = \{M_\sigma\}_{\sigma \in \Lk_K(a)}$, where $M_\sigma = \Ind_{\Lk_K(a)}( \sigma \cup L_a)$. Observe that the nerve $\NNN(\UUU)$ is the full simplex as the intersection of $\UUU$ is non-trivial (it contains at least $L_a$). In particular, $\NNN(\UUU) \simeq \bullet$. By the nerve theorem it remains to prove that $\UUU$ is a good cover of $\Lk_K(a)$.
 
 Let $\sigma_1, \sigma_2, \ldots, \sigma_\ell$ be a collection of simplices in $\Lk_K(a)$. We intend to prove that $\III = \bigcap_{i=1}^\ell M_{\sigma_i} \simeq \bullet$. Observe that $\III = \Ind_{L_a}\left(L_a \cup  \bigcap_{i=1}^\ell \sigma_i\right)$. 
 In a special case when $\bigcap_{i=1}^\ell \sigma_i = \emptyset$ we have $\III = L_a$ is a simplex thus clearly contractible.
 In general, $\bigcap_{i=1}^\ell \sigma_i \in \Lk_K(a)$ and by Definition \ref{Def:LocalDom}, there exists $b\in L_a$, such that $\bigcap_{i=1}^\ell \sigma_i  \cup \{b\} \in \Lk_K(a)$. This implies that every vertex of 
 $\bigcap_{i=1}^\ell \sigma_i$ is connected to $b$.
 As $\III$ is a flag complex and $b \in L_a$, this implies $\III$ is a cone with the apex at $b$ (as every vertex of $\III$ is connected to $b$) and thus contractible. As a result, $\UUU$ is a good cover and the theorem follows.
\end{proof}

We now translate local domination concept into a metric setting amenable to Rips complexes. 

\begin{definition}
\label{Def:LocCrush}
 Let $(X, d)$ be a finite metric space, $x \in X$, and $r>0$. Point $x$ is \textbf{locally crushable} (at scale $r$) in $X$ if there exists a subset $L_x \subset X$ not containing $x$ and of diameter $\diam(L_x) \leq r$, such that for each $A \subset X$ satisfying $x\in A, \diam(A) \leq r$, there exists $y\in L_x$: $\diam(A \cup \{y\}) \leq r$.
\end{definition}

The concept of a crushing as a method of transforming the underlying set of a Rips complex while preserving the homotopy type of the complex has first been used by Hausmann \cite{Haus}. Crushing and its discrete variant have later been used in \cite{Lat,  ZV2, AA, Rolle, Lem}. Definition \ref{Def:LocCrush} provides a combinatorially-local version of crushing. 

The following theorem is a direct consequence of Theorem \ref{Thm:LocalDom} and the definition of Rips complexes. 

\begin{theorem}
 \label{Thm:Crush}
 Let $X$ be a finite metric space, $x \in X$, and $r>0$. If point $x$ is locally crushable in $X$, then $\Rips(X,r) \simeq \Rips(X \setminus \{x\},r)$.
\end{theorem}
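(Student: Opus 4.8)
The plan is to derive Theorem~\ref{Thm:Crush} as an essentially formal consequence of Theorem~\ref{Thm:LocalDom}, by checking that the metric notion of local crushability for a point $x \in X$ translates verbatim into the combinatorial notion of local domination of the vertex $x$ in the flag complex $K = \Rips(X,r)$. First I would set $K = \Rips(X,r)$ and recall that $K$ is a flag complex (noted at the end of Section~\ref{Sect:Prelim}), so Theorem~\ref{Thm:LocalDom} applies once local domination is established. I would also observe that $\Ind_K(V(K)\setminus\{x\}) = \Rips(X\setminus\{x\},r)$: the induced subcomplex on the vertex set $X\setminus\{x\}$ of a Rips complex is exactly the Rips complex of that subset at the same scale, since the simplex condition $\diam(\sigma)\le r$ is unchanged when restricting to subsets avoiding $x$.

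The core step is the dictionary between the two definitions. Given that $x$ is locally crushable, Definition~\ref{Def:LocCrush} supplies a set $L_x \subset X$ with $x \notin L_x$ and $\diam(L_x) \le r$; the latter means $L_x$ is a simplex of $K$ not containing $x$, which is exactly the simplex $L_x$ required in Definition~\ref{Def:LocalDom}. Now take any simplex $\sigma \in K$ with $x \in \sigma$. By definition of the Rips complex, $\sigma$ is a finite subset of $X$ with $\diam(\sigma) \le r$ and $x \in \sigma$, so $\sigma$ is precisely a set $A$ as in Definition~\ref{Def:LocCrush}. Local crushability then yields $y \in L_x$ with $\diam(\sigma \cup \{y\}) \le r$, i.e.\ $\sigma \cup \{y\} \in K$; setting $b_\sigma = y$ gives exactly the condition in Definition~\ref{Def:LocalDom}. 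Hence $x$ is locally dominated in $K$. (The clause ``$x$ is not isolated'' in Definition~\ref{Def:LocalDom} is automatic: $L_x$ is nonempty, and taking $A = \{x\}$ gives a vertex $y \in L_x$ with $\diam(\{x,y\}) \le r$, so $\{x,y\}$ is an edge.)

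Finally I would invoke Theorem~\ref{Thm:LocalDom} with $K = \Rips(X,r)$ and $a = x$ to conclude $\Rips(X,r) = K \simeq \Ind_K(V(K)\setminus\{x\}) = \Rips(X\setminus\{x\},r)$, which is the assertion. I do not expect any genuine obstacle here: the theorem is, as the text says, a direct consequence, and the only things to be careful about are the two identifications ``subset of diameter $\le r$ containing $x$ $\leftrightarrow$ simplex of $\Rips(X,r)$ containing the vertex $x$'' and ``induced subcomplex on $X\setminus\{x\}$ $\leftrightarrow$ $\Rips(X\setminus\{x\},r)$,'' both of which are immediate from the definition of the Rips complex. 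The finiteness of $X$ is used only insofar as it is part of the hypothesis of Definition~\ref{Def:LocCrush} and makes $L_x$ (and every $\sigma$) a genuine simplex; no compactness-type argument is needed.
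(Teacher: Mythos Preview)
Your proposal is correct and follows precisely the approach indicated in the paper, which simply states that the theorem is a direct consequence of Theorem~\ref{Thm:LocalDom} and the definition of Rips complexes. You have spelled out exactly the dictionary the paper leaves implicit, including the identifications $\Ind_K(V(K)\setminus\{x\}) = \Rips(X\setminus\{x\},r)$ and ``$\diam(L_x)\le r$'' $\leftrightarrow$ ``$L_x$ is a simplex,'' so there is nothing to add.
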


\section{Local Euclidean crushing}
\label{Sect:LEC}

In this section we prove a variant of the local crushing property for compact subsets of a Euclidean space, see Definition \ref{Def:LEC}. While our intended application is to finite subsets (i.e., the simplices of Rips complexes), the use of the Jung constant allows us to phrase it for compact subsets. Throughout this section we fix a positive integer $n$.

Given a compact subset $A \subset \RR^n$, let  $\bx(A)$ denote the smallest box of the form $\prod_{i=1}^n [a_i, b_i] \subset \RR^n$ containing $A$.  In particular, each $a_i$ is the minimal value of the $i^{\textrm{th}}$ coordinate attained by $A$. 

\begin{definition}
 \label{Def:LEC}
Given a positive integer $n$  and $\rho >0$, we say that the \textbf{local Euclidean crushing property} $\lec(n, \rho)$ holds, if for each compact $\tau \subset \RR^{n-1} \times [0, \infty)$ with $0^n \in \tau, \diam(\tau) \leq 2n-1$, there exists $c_\tau \in ([-1,1]^{n-1} \times [0,1]) \cap \bx(\tau)$, such that the following holds: for each $x\in \tau, d(x, c_\tau) \leq (2n-1) - \rho$.
\end{definition}

Observe that if $\lec(n, \rho)$ holds, then $\lec(n, \rho')$ also holds for all positive $\rho' < \rho.$

\begin{remark}
\label{Rem:Box}
Here we provide a few comments on Definition \ref{Def:LEC}. 
In a forthcoming argument, Definition \ref{Def:LEC} will be used inductively in the lexicographical order. To that purpose, we have phrased it in terms of subsets of $\RR^{n-1} \times [0, \infty)$ (where all lexicographical successors of $0^n$ lie) instead of $\RR^n$.
The definition is essentially phrasing a local crushing condition for compact subsets in $\RR^{n-1} \times [0, \infty)$ with the redundancy $\rho$. The redundancy will later be required to pass to the lattices. Without the loss of generality, the ``anchor'' point is taken to be $0^n$. The role of the subset $L_x$ from Definition \ref{Def:LocCrush}, onto which the local crushing is occurring, is assigned to the box $[-1,1]^{n-1} \times [0,1]$ as one of the simplest forms of easily determined diameter. Its diameter $2n-1$ is thus used as the diameter bound. 

The proof of the variant of the $\lec$ property using $\RR^n$ instead of $\RR^{n-1} \times [0, \infty)$, and $L_x=[-1,1]^n$ instead of $[-1,1]^{n-1} \times [0,1]$,  is identical to the given proof. 
\end{remark}

We will now explain how the constant $\rho$ is related to the Jung constant. 

Given $x\in \RR^n$ and $r>0$, define the closed ball $B(x,r)= \{y\in \RR^n \mid d_1(x,y) \leq r\}$.
For a compact subset $A \subset \RR^n$ define the smallest enclosing radius $\rad(A)$ as the minimal radius of a ball in $\RR^n$ containing $A$, and keep in mind that we are using the $d_1$ metric. By compactness, the minimum $\rad(A)$ exists. The center $C(A)$ of such a ball may not be unique though. For $A'= \{(1,0),(0,1)\}$, any point on the closed line segment from $(0,0)$ to $(1,1)$ is the center of such a ball. Therefore, a center may not be contained in the convex hull of $A$ (where by convex hull we mean the set of all convex combinations of points of $A \subset \RR^n$, not the geodesic convex hull). However, each center is in $\bx(A)$. This is easy to see: if a point $C'$ has, for example, the first coordinate smaller than the first coordinates of all points of $A$, then increasing its first coordinate to the minimal first coordinate of $A$ decreases the distances from $C'$ to each point of $A$. Performing such a modification for each coordinate we obtain a $C(A)$ within $\bx(A)$.

\begin{definition}
 \label{Def:Jung}
Given a positive integer $n$,  the \textbf{Jung constant} is defined as
$$
J(n) = \sup \{\rad(A) \mid A \subset \RR^n, \diam(A) \leq 1\}.
$$
\end{definition}

There is a long history of results on parameters $J(n)$, starting with \cite{Jung} who proved that $J(n, d_2) =\sqrt{\frac{n}{n+1}}$ holds for the Euclidean metric $d_2$. For a review of the topic see \cite{Varhatis} or \cite{Amir}. For our argument it will be crucial to have $J(n) < 1$ (which holds by (1) of Theorem \ref{Thm:JungBounds}), while explicit upper bounds will provide eventual bounds on the scales at which the Rips complexes of integer lattices are contractible.  The following proposition states some of the bounds on the Jung constant that we will be using. 

\begin{theorem}
 \label{Thm:JungBounds}
The following are some of the bounds on the Jung constant in the $d_1$ metric:
\begin{enumerate}
 \item The bound $J(n) \leq \frac{n}{n+1}$ was proved by \cite{Bohn}, see also a short proof using the Helly theorem in \cite[Proposition 2.12]{Amir}
 \item $J(n) = \frac{n}{n+1}$ iff there exists a Hadamard matrix of order $n + 1$. This result was proved by \cite{Dol}, see also \cite[Section 15]{Alimov} and \cite{Ivanov} for some related results.
\end{enumerate}
\end{theorem}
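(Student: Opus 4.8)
I would handle the two items separately: only item (1) is needed later (through $J(n)<1$ and the explicit bound), and I would prove it by the short Helly-type argument, while for item (2) the ``if'' direction is a self-contained construction and the ``only if'' direction reduces to the combinatorial rigidity theorem attributed to Dol'nikov, which is where the real difficulty lies. For (1), fix a compact $A\subset\RR^n$ with $\diam(A)\le1$; for any $a_0,\dots,a_n\in A$ the centroid $g=\tfrac1{n+1}\sum_{i=0}^n a_i$ satisfies
\[
d_1(g,a_j)=\left\|\tfrac1{n+1}\sum_{i=0}^n(a_j-a_i)\right\|_1\le\frac1{n+1}\sum_{i\ne j}\|a_j-a_i\|_1\le\frac n{n+1}
\]
for every $j$, so $g\in\bigcap_{i=0}^n B(a_i,\tfrac n{n+1})$. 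Since $\ell_1$-balls are convex and compact, Helly's theorem forces every finite subfamily of $\{B(a,\tfrac n{n+1})\}_{a\in A}$ to intersect, hence the whole family has a common point $c$; then $A\subseteq B(c,\tfrac n{n+1})$, so $\rad(A)\le\tfrac n{n+1}$, and taking the supremum over $A$ gives $J(n)\le\tfrac n{n+1}$.

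For the ``if'' direction of (2), take a Hadamard matrix $H=(h_{ij})_{0\le i,j\le n}$ of order $n+1$, normalize it by negating rows and columns so that $h_{i0}=h_{0j}=1$, and set $p_i=\tfrac1{n+1}(h_{i1},\dots,h_{in})$ and $A=\{p_0,\dots,p_n\}$. Row-orthogonality $1+\sum_{k=1}^n h_{ik}h_{jk}=0$ shows $p_i$ and $p_j$ ($i\ne j$) disagree in exactly $\tfrac{n+1}2$ coordinates, so $\diam(A)=1$; and $\|p_i\|_1=\tfrac n{n+1}$ gives $\rad(A)\le\tfrac n{n+1}$, while column-orthogonality with the all-ones column makes each column of $H$ balanced, whence $\sum_{i=0}^n d_1(c,p_i)=\sum_{k=1}^n\sum_{i=0}^n|c_k-h_{ik}/(n+1)|\ge n$ for every $c$, so $\max_i d_1(c,p_i)\ge\tfrac n{n+1}$ and therefore $\rad(A)=\tfrac n{n+1}$. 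With (1) this yields $J(n)=\tfrac n{n+1}$.

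For the ``only if'' direction — the step I expect to be the main obstacle — suppose $J(n)=\tfrac n{n+1}$. Using the Blaschke selection theorem the supremum is attained by some compact $A^*$, and the smallest enclosing $\ell_1$-ball of $A^*$, being an LP in the $n+1$ unknowns $(c,r)$, is determined by a subset $F=\{p_0,\dots,p_m\}\subseteq A^*$ with $m\le n$, all $p_i$ on its boundary sphere, $\diam(F)\le1$ and $\rad(F)=\tfrac n{n+1}$. Applying the centroid estimate of (1) to $F$ gives $\rad(F)\le\tfrac m{m+1}$, forcing $m=n$ and equality throughout, hence $\|p_i-p_j\|_1=1$ for all $i\ne j$ together with a sign-coherence condition on coordinatewise differences. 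What remains is the rigidity claim that such an extremal $(n+1)$-point set must, after centering at $0^n$ and rescaling by $n+1$, lie in $\{\pm1\}^n$ with balanced coordinates and pairwise Hamming distance $\tfrac{n+1}2$, so that adjoining an all-ones coordinate and an all-ones row produces a Hadamard matrix of order $n+1$; this rigidity statement is exactly the theorem attributed to Dol'nikov above, whose proof I would not reproduce here, referring instead to \cite{Dol, Alimov, Ivanov}.
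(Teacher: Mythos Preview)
The paper gives no proof of this theorem at all: it is stated purely as a quotation of known results, with each item attributed to the cited references (\cite{Bohn,Amir} for (1), \cite{Dol,Alimov,Ivanov} for (2)). So there is nothing to compare against on the paper's side; your proposal supplies strictly more than the paper does.

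That said, your argument for (1) is correct and is exactly the short Helly argument of \cite[Proposition~2.12]{Amir} that the paper points to: centroid of any $n+1$ points lies in each of their $\tfrac{n}{n+1}$-balls, then Helly plus compactness of $d_1$-balls gives a common center. Your ``if'' direction of (2) is also correct; the row-orthogonality computation yields $(n+1)/2$ coordinate disagreements hence pairwise $d_1$-distance $1$, and the balanced-column lower bound $\sum_i d_1(c,p_i)\ge n$ is the right way to pin $\rad(A)$ from below. For the ``only if'' direction you correctly isolate the genuine difficulty (the rigidity of extremal configurations) and defer it to \cite{Dol}; this is in effect what the paper itself does. One small caveat: your LP sketch for extracting an $(n+1)$-point determining subset is a bit loose, since the $\ell_1$ enclosing-ball problem is not literally linear in $(c,r)$ without auxiliary variables, so the ``$n+1$ active constraints'' count needs a word of justification --- but as you hand the rest of that direction to \cite{Dol} anyway, this does not affect the overall soundness of what you wrote.
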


In general, determining the exact value of the Jung constant for $\RR^n$ with the $d_1$ metric is an open problem.

\begin{lemma}
[$\lec$ and the Jung constant]
 \label{Lem:Main}
 Fix an integer $n > 1$ and $\kappa \in [J(n),1)$. Then $\lec(n,\rho)$ holds for $\rho = \frac{1-\kappa}{\kappa} $.
\end{lemma}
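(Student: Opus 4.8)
The plan is to take a compact $\tau \subset \RR^{n-1}\times[0,\infty)$ with $0^n\in\tau$ and $\diam(\tau)\leq 2n-1$, and to produce the required point $c_\tau$ by \emph{rescaling} the smallest enclosing ball. First I would invoke the Jung constant: since $\diam(\tau)\leq 2n-1$, Definition \ref{Def:Jung} gives a ball $B(C,R)$ with $\tau \subseteq B(C,R)$ and $R = \rad(\tau)\leq (2n-1)J(n)\leq (2n-1)\kappa$; moreover, as noted in the paragraph before Definition \ref{Def:Jung}, the center $C$ may be chosen inside $\bx(\tau)$. This already gives a point within distance $(2n-1)\kappa$ of every point of $\tau$, but $C$ need not lie in the prescribed box $[-1,1]^{n-1}\times[0,1]$, so a correction is needed, and the scale bound $(2n-1)\kappa$ must be improved to $(2n-1)-\rho = (2n-1)\kappa$ — which, with $\rho = \frac{1-\kappa}{\kappa}$, means I actually need distance at most $(2n-1) - \frac{1-\kappa}{\kappa}$. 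Let me recompute: $(2n-1) - \rho = (2n-1) - \frac{1-\kappa}{\kappa}$. So the target is slightly \emph{smaller} than $(2n-1)\kappa$ unless these coincide; I would verify the intended relation is $(2n-1)\kappa = (2n-1)-\rho$ only after fixing $n$, and otherwise simply carry the honest bound $(2n-1)\kappa$ and check it is $\leq (2n-1)-\rho$, i.e. that $\rho \leq (2n-1)(1-\kappa)$, which holds since $\frac{1-\kappa}{\kappa}\leq (2n-1)(1-\kappa)$ is equivalent to $1 \leq (2n-1)\kappa$, true because $\kappa \geq J(n) = \frac{n}{n+1} \geq \frac{2}{3}$ for $n\geq 2$ and $2n-1\geq 3$.

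The second step is to move $C$ into the box $[-1,1]^{n-1}\times[0,1]$ while keeping it in $\bx(\tau)$ and without increasing distances to points of $\tau$. Write $C = (C_1,\dots,C_n)$. Since $0^n\in\tau$, every coordinate range of $\bx(\tau)$ contains $0$, and since $\diam(\tau)\leq 2n-1$ with $\tau\subset\RR^{n-1}\times[0,\infty)$, the last coordinate of $\bx(\tau)$ lies in $[0, 2n-1]$ and each other coordinate range is an interval of length at most $2n-1$ containing $0$. For each coordinate $i<n$, if $C_i > 1$ replace it by $1$ (legal since the box-coordinate $[a_i,b_i]$ has $b_i \geq \dots$; I need $1 \in [a_i,b_i]$, which requires $b_i \geq 1$ — this may fail, so instead I clamp $C_i$ to the interval $[-1,1]\cap[a_i,b_i]$, which is nonempty because $0$ lies in both). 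Clamping a coordinate of the center toward the coordinate-range of $\tau$ only decreases each $|C_i - x_i|$ for $x\in\tau$, hence does not increase any $d_1(C,x)$; likewise clamp $C_n$ into $[0,1]\cap[a_n,b_n]$. The resulting point $c_\tau$ lies in $([-1,1]^{n-1}\times[0,1])\cap\bx(\tau)$ and satisfies $d(x,c_\tau)\leq d(x,C)\leq R \leq (2n-1)\kappa \leq (2n-1)-\rho$ for all $x\in\tau$, which is exactly the conclusion of $\lec(n,\rho)$.

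The main obstacle is the bookkeeping around the box constraint: one must simultaneously keep $c_\tau$ in $[-1,1]^{n-1}\times[0,1]$ \emph{and} in $\bx(\tau)$, and argue that the clamping operation genuinely does not inflate any $d_1$-distance to $\tau$ — this is the standard coordinatewise monotonicity argument already sketched in the excerpt for producing a center inside $\bx(A)$, applied now with the additional stop at $\pm 1$. The only subtlety is that $[-1,1]\cap[a_i,b_i]$ is nonempty, which is guaranteed precisely because $0^n\in\tau$ forces $0\in[a_i,b_i]$. I would also double-check the numerical inequality $R\leq(2n-1)-\rho$ once more using $\rho = \frac{1-\kappa}{\kappa}$ and $R\leq(2n-1)\kappa$: it reduces, as above, to $(2n-1)\kappa\geq 1$, which is immediate from $\kappa\geq\frac{n}{n+1}$ and $n>1$. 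No further input beyond Theorem \ref{Thm:JungBounds}(1) and the elementary center-in-box observation is needed.
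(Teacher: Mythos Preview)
Your first step---locating the Chebyshev center $C\in\bx(\tau)$ and invoking the Jung constant to get $R\leq(2n-1)\kappa$---matches the paper exactly. The gap is in your second step. You assert that clamping each $C_i$ into $[-1,1]\cap[a_i,b_i]$ ``does not increase any $d_1(C,x)$'' for $x\in\tau$, citing the argument that pushes a center into $\bx(A)$. But that argument works only because every point of $A$ has its $i$-th coordinate in $[a_i,b_i]$: moving $C_i$ toward $[a_i,b_i]$ moves it toward every $x_i$. Here you are clamping into the \emph{smaller} interval $[-1,1]$, while points of $\tau$ may have $|x_i|$ as large as $2n-1$. If, say, $C_i=2$ and $x_i=5$, clamping $C_i$ to $1$ moves it \emph{away} from $x_i$, increasing $|x_i-C_i|$ from $3$ to $4$. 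So the chain $d(x,c_\tau)\leq d(x,C)\leq R\leq(2n-1)-\rho$ is unjustified at its first link.

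The paper handles exactly this issue by replacing coordinatewise clamping with a single convex combination $c_\tau=\lambda C+(1-\lambda)0^n$, where $\lambda=\frac{1}{(2n-1)\kappa}\in(0,1]$. This stays in $\bx(\tau)$ since both endpoints do; it lands in the unit $d_1$-ball about $0^n$ (hence in $[-1,1]^{n-1}\times[0,1]\cap\bx(\tau)$) because $d(0^n,c_\tau)=\lambda\,d(0^n,C)\leq\lambda R\leq 1$; and for every $y\in\tau$ convexity of the norm gives
\[
d(y,c_\tau)\;\leq\;\lambda\,d(y,C)+(1-\lambda)\,d(y,0^n)\;\leq\;1+(2n-1)\Bigl(1-\tfrac{1}{(2n-1)\kappa}\Bigr)\;=\;(2n-1)-\rho.
\]
The essential point your clamping misses is that one must exploit \emph{both} bounds $d(y,C)\leq R$ and $d(y,0^n)\leq 2n-1$; the convex combination interpolates between them, whereas clamping uses only the first. (A minor aside: you write $\kappa\geq J(n)=\tfrac{n}{n+1}$, but Theorem~\ref{Thm:JungBounds}(1) gives only $J(n)\leq\tfrac{n}{n+1}$. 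The inequality $(2n-1)\kappa\geq1$ you need follows instead from the trivial bound $J(n)\geq\tfrac12$.)
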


\begin{proof}
 Choose a compact $\tau \subset \RR^{n-1} \times [0, \infty)$ with $0^n \in \tau, \diam(\tau) \leq 2n-1$. 
 By the discussion above we may choose the center $c'_\tau$ of a minimal ball containing $\tau$ within $\bx(\tau)$, i.e., $c'_\tau \in \bx(\tau)$. Using the scaling of the Jung constant we obtain $\rad(\tau) \leq \diam(\tau) J(n)\leq  (2n-1)\kappa$. In particular, for each $ x\in \tau: d(x, c'_\tau) \leq (2n-1) \kappa.$ 
 Observe that $(2n-1) \kappa > 1$ as $n>1$ and $\kappa \geq 1/2$ (the factor $1/2$ in the definition of $J(n)$ being attained when considering a ball).
 As $0^n \in \bx(\tau)$, so is the convex combination
 $$
 c_\tau = \frac{1}{(2n-1) \kappa} c'_\tau + \left(1-\frac{1}{(2n-1) \kappa} \right)0^n. 
 $$
 
 Choose any $y\in \tau$. Then by the triangle inequality,
 $$
 d(y, c_\tau) \leq \frac{1}{(2n-1) \kappa} d(y, c'_\tau)+ \left(1-\frac{1}{(2n-1) \kappa}\right) d(y, 0^n) \leq 
 $$
 $$
 \leq 1 + \Big((2n-1)  - 1/\kappa\Big) = (2n-1) \left( \frac{1}{2n-1} + 1 - \frac{1}{(2n-1) \kappa}\right)=
 $$
 $$
 =(2n-1) \left(  1 - \frac{1-\kappa}{(2n-1) \kappa}\right) \leq (2n-1) -\frac{1-\kappa}{\kappa} = (2n-1) - \rho.
 $$
 and similarly, $d(0^n, c_\tau) \leq 1$, which implies $c_\tau \in ([-1,1]^{n-1} \times [0,1])$. Thus the theorem holds.
\end{proof}

For the purposes of the inductive proof of Theorem \ref{Thm:Main1} we will require the $\lec(n, \rho(n))$ property with a parameter $\rho(n)$ which is monotone in $n$. Combining with (1) of Theorem \ref{Thm:JungBounds} we thus phrase the following.

\begin{corollary}
\label{Cor:Main} 
Fix an integer $n > 1$. Then $\lec(n,\rho(n))$ holds for $\rho(n) = \frac{1}{n}$.
\end{corollary}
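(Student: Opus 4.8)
The plan is to derive Corollary~\ref{Cor:Main} directly from Lemma~\ref{Lem:Main} by making a judicious choice of the parameter $\kappa$. Lemma~\ref{Lem:Main} tells us that for any $\kappa \in [J(n),1)$ the property $\lec(n,\rho)$ holds with $\rho = \frac{1-\kappa}{\kappa}$, and by the observation immediately following Definition~\ref{Def:LEC}, $\lec(n,\rho)$ implies $\lec(n,\rho')$ for every $0 < \rho' \le \rho$. So it suffices to exhibit a legal choice of $\kappa$ for which $\frac{1-\kappa}{\kappa} \ge \frac{1}{n}$; then $\lec(n,\frac{1}{n})$ follows.

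First I would use part~(1) of Theorem~\ref{Thm:JungBounds}, which gives $J(n) \le \frac{n}{n+1}$. Since we need $\kappa \ge J(n)$, the natural and in fact optimal choice is $\kappa = \frac{n}{n+1}$: this is admissible because $J(n) \le \frac{n}{n+1}$ and because $\frac{n}{n+1} < 1$, so $\kappa \in [J(n),1)$ as required by Lemma~\ref{Lem:Main}. (For $n > 1$ we also have $\kappa = \frac{n}{n+1} \ge \tfrac12$, consistent with the hypotheses used inside the proof of Lemma~\ref{Lem:Main}, though this is automatic.)

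Next I would simply compute the resulting redundancy: with $\kappa = \frac{n}{n+1}$,
\[
\rho = \frac{1-\kappa}{\kappa} = \frac{1 - \frac{n}{n+1}}{\frac{n}{n+1}} = \frac{\frac{1}{n+1}}{\frac{n}{n+1}} = \frac{1}{n}.
\]
Hence $\lec(n,\frac{1}{n})$ holds exactly, with no slack needed, and in particular $\rho(n) = \frac1n$ is monotone decreasing in $n$ as the subsequent inductive argument requires.

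There is essentially no obstacle here: the only thing to be careful about is that the choice $\kappa = \frac{n}{n+1}$ genuinely lies in the interval $[J(n),1)$ demanded by Lemma~\ref{Lem:Main}, which is precisely what Theorem~\ref{Thm:JungBounds}(1) guarantees, and that the arithmetic $\frac{1-\kappa}{\kappa} = \frac1n$ is exact rather than merely an inequality. One subtlety worth a sentence in the write-up is that $J(n)$ itself is generally not known exactly (as the paper notes), so we cannot plug in the true value; the point is that the \emph{upper} bound $\frac{n}{n+1}$ is all that is needed, since making $\kappa$ larger only shrinks $\rho$, and we are content with $\rho = \frac1n$.
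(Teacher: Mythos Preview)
Your proposal is correct and follows exactly the paper's approach: choose $\kappa = \frac{n}{n+1}$ using Theorem~\ref{Thm:JungBounds}(1) and apply Lemma~\ref{Lem:Main} to obtain $\rho = \frac{1-\kappa}{\kappa} = \frac{1}{n}$. The paper's proof is a one-line version of what you wrote.
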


\begin{proof}
 Apply $\kappa = \frac{n}{n+1}$ from (1) of Theorem \ref{Thm:JungBounds} to Lemma \ref{Lem:Main}.
\end{proof}

\section{Contractibility of integer lattices}
\label{Sect:MainContract}

In Subsection \ref{Sub:1} we use the $\lec$ property inductively to conclude the contractibility of Rips complexes of integer lattices at large scales. In Subsection \ref{Sub:2} we provide ad-hoc local crushings in dimension $3$, which yield smaller scales of contractibility for Rips complexes of $\ZZ^3$.

In both of these arguments we will be using the \textbf{lexicographical order} in $\RR^n$ defined as follows: $(x_1, x_2, \ldots, x_n) \prec (y_1, y_2, \ldots, y_n)$ iff the largest index $i$ at which the two vectors differ satisfies $x_i < y_i$. When $x_i, y_i \in \{0, 1, \ldots, 9\}$, the equivalent condition is that the corresponding $n$-digit numbers satisfy $x_{n} x_{n-1}\ldots x_2 x_1 < y_{n} y_{n-1}\ldots y_2 y_1.$ In comparison to the standard lexicographical order we have reversed the indexing so as to have the last coordinate be the dominant one. Also, for a positive real number $w$ let $(w\ZZ)^n$ denote the integer lattice scaled by $w$, i.e., $ \{ w \cdot z \mid z \in \ZZ^n\}.$ For each $i$ let $\pi_i \colon \RR^n \to \RR$ denote the projection to the $i^{\textrm{th}}$ coordinate.

\subsection{Contractibility of integer lattices at large scales}
\label{Sub:1}

\begin{theorem}
 \label{Thm:Main1}
For each $r \geq n^2 (2n-1)$ we have $\Rips((\ZZ^n, d_1), r) \simeq \bullet$.
\end{theorem}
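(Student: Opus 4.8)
The plan is to reduce to finite lattices and then strip vertices one at a time in lexicographical order using the local crushing theorem (Theorem \ref{Thm:Crush}), with the local crushing condition being verified through the $\lec(n,\rho(n))$ property of Corollary \ref{Cor:Main}. First I would invoke the Whitehead Theorem: since every element of a homotopy group of $\Rips(\ZZ^n,r)$ is supported on finitely many vertices, it suffices to show $\Rips(F,r) \simeq \bullet$ for every finite $F \subseteq \ZZ^n$, and in fact for every ``lexicographic initial segment'' type truncation — more precisely, it is enough to treat $F = \{0,1,\ldots,M\}^n$ for arbitrarily large $M$, since any finite set is contained in such a box (after translation).

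The core of the argument is an inductive crushing procedure on $F$. Enumerate the points of $F$ in increasing lexicographical order (with the convention from the paper that the last coordinate dominates) as $v_1 \prec v_2 \prec \cdots$, and I would remove them one by one starting from $v_1 = 0^n$. The key claim is: as long as $v = v_k$ is not the lexicographically largest point of $F$ (or, for the very last stage, one applies a lower-dimensional instance of the result), $v$ is locally crushable at scale $r$ in the remaining set $X_k = F \setminus \{v_1,\ldots,v_{k-1}\}$. To verify local crushability I would translate so that $v = 0^n$; then all of $X_k$ lies (weakly) lexicographically above $0^n$, which after the translation means $X_k \setminus \{0^n\} \subseteq \RR^{n-1}\times[0,\infty)$ together with possibly some points in the hyperplane $\pi_n = 0$ that are lexicographically above $0^n$ — this is exactly the reason Definition \ref{Def:LEC} is phrased over $\RR^{n-1}\times[0,\infty)$ rather than $\RR^n$. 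Given any $A \subseteq X_k$ with $0^n \in A$ and $\diam(A) \le r$, I would scale the lattice down: writing $r = w(2n-1)$ for an appropriate $w \ge n^2$ (so $w$ is a positive integer, using $r \ge n^2(2n-1)$), the set $\frac{1}{w}A$ has diameter $\le 2n-1$ and sits in $\RR^{n-1}\times[0,\infty)$ with $0^n$ in it, so $\lec(n,\rho(n))$ furnishes a point $c \in ([-1,1]^{n-1}\times[0,1]) \cap \bx(\frac{1}{w}A)$ with $d(x,c) \le (2n-1) - \frac1n$ for all $x \in \frac1w A$. Rescaling, $wc$ is within $[-w,w]^{n-1}\times[0,w]$ and within the $d_1$-distance $w(2n-1) - w/n = r - w/n \le r - n$ of every point of $A$; the redundancy $w/n \ge n \ge 1$ is what lets me round $wc$ to a genuine lattice point $y \in \ZZ^n$ (at $d_1$-distance at most $n$, since $wc$ lies in an $n$-dimensional box and rounding each of the $n$ coordinates costs at most $1/2$ each — actually at most $n\cdot\frac12$, comfortably $\le n$) without the distance bound $\diam(A\cup\{y\}) \le r$ being violated. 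Finally I must check $y$ lies in $X_k$: since $wc \in [-w,w]^{n-1}\times[0,w]$ and all coordinates round to integers, $y$ has nonnegative last coordinate, and one checks $y$ is lexicographically $\ge 0^n$ (handling the boundary case $\pi_n(y)=0$ by the box constraint $c \in \bx(\frac1w A)$, which keeps $c$ among the lexicographic successors), hence $y \in F$ and $y$ has not yet been removed; also $y \ne 0^n$ can be arranged since $A$ has a point other than $0^n$ (as $0^n$ is not the last vertex and $A$ can be taken to contain its lex-successor, or one argues directly). The set $L_{0^n}$ of Definition \ref{Def:LocCrush} is then the (finite) collection of all such lattice points $y$ arising from the finitely many admissible $A$; its diameter is $\le 2w \le r$ because they all lie in $[-w,w]^{n-1}\times[0,w]$ — wait, that box has $d_1$-diameter $w(2n-1) = r$, so $\diam(L_{0^n}) \le r$, as required. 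Theorem \ref{Thm:Crush} then removes $v_k$ while preserving homotopy type, and iterating collapses $F$ down to a single point (the last vertex), invoking the lower-dimensional cases of the theorem, or $\Rips(\{v\},r)=\bullet$ directly, at the end.

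The main obstacle — and the step that needs the most care — is the bookkeeping that the rounded center $y$ both stays in the not-yet-removed part of the lattice (the lexicographic monotonicity, including the delicate boundary behavior in the hyperplane $\pi_n = 0$) and that the redundancy $\rho(n) = 1/n$ supplied by Corollary \ref{Cor:Main}, scaled up by $w \ge n^2$, genuinely dominates the rounding error incurred in passing from $\RR^n$ back to $\ZZ^n$; this is precisely why the scale threshold is $n^2(2n-1)$ rather than something smaller. A secondary point requiring attention is ensuring $r = w(2n-1)$ can be arranged with integer $w$, or more honestly handling general real $r \ge n^2(2n-1)$ by taking $w = \lfloor r/(2n-1)\rfloor \ge n^2$ and noting the $\lec$ diameter bound $\diam(\frac1w A) \le r/w$ may slightly exceed $2n-1$, which forces a small additional slack argument or a re-derivation of $\lec$ with the bound $r/w$ in place of $2n-1$ — the monotonicity remark after Definition \ref{Def:LEC} and the linear scaling in Lemma \ref{Lem:Main} make this harmless but it must be said explicitly.
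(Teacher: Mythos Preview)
Your overall strategy---Whitehead reduction to finite boxes, lexicographic vertex removal via Theorem~\ref{Thm:Crush}, with $\lec(n,1/n)$ supplying centers---is the same as the paper's. The gap is precisely where you flag it: you assert that ``the box constraint $c\in\bx(\frac{1}{w}A)$ keeps $c$ among the lexicographic successors,'' but this is false. The set $\bx(A)$ is a coordinate box, not a lexicographic upper set. Since $A$ may contain points with strictly positive last coordinate and negative earlier coordinates (all such points are lexicographically above $0^n$ and hence legitimately in $X_k$), the box $\bx(A)$---and with it $c$, and then the rounded $y$---can have last coordinate $0$ together with a negative earlier coordinate, landing lexicographically \emph{below} $0^n$, i.e., on a vertex already deleted from $X_k$. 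Nothing in your argument rules this out.

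The paper's remedy is to force the last coordinate of the rounded center to be strictly positive: when snapping $c_\tau$ to the grid, if the $n^{\textrm{th}}$ coordinate would land on $\pi_n(x_{i+1})$, snap it instead to the next grid value above. This puts the rounded center in the slab $\{z:\pi_n(z)>\pi_n(x_{i+1})\}$, which lies entirely in $X_i$ by the lexicographic order, so no further coordinate-by-coordinate checking is needed. Your redundancy budget (rounding costs at most $n/2$ against slack $w/n\ge n$) comfortably absorbs the extra unit this costs, so the fix is available to you---but it has to be made explicitly. This trick also explains the paper's two-case induction: the upward snap is only possible while $\pi_n(x_{i+1})$ is not maximal; once all remaining points share the top last coordinate, one must recurse to the $(n-1)$-dimensional statement. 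You conflate this with ``the very last vertex,'' which is too late---the dimension drop has to occur for the entire top hyperplane. Finally, the paper sidesteps your integer-$w$ patch by working from the outset in the rescaled lattice $(\frac{1}{m}\ZZ)^n$ at the fixed scale $2n-1$ with real $m=r/(2n-1)\ge n^2$.
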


\begin{proof}
The initial two cases for $n=1, 2$ follow from Theorem \ref{Thm:Rips12}.

Assume $n> 2$, define $\rho = \frac{1}{n}$ and observe that $\lec(n', \rho)$ holds for all $n' \in \{2, 3, \ldots, n\}$ by Corollary \ref{Cor:Main}. 
 Choose any real $m \geq  n / \rho $. We \textbf{claim} that $\Rips((1/m \cdot \ZZ)^n, (2n-1))$ is contractible. Deferring the proof of the claim for a moment, we observe that the claim combined with the scaling of $(1/m \cdot \ZZ)^n$ by $m$ implies that $\Rips(\ZZ^n, (2n-1) m)$ is contractible, for each $m \geq n / \rho$. This means that the theorem holds for all scales $r = (2n-1)m \geq n^2 (2n-1)$.

It thus remains to prove the claim that $\Rips((1/m \cdot \ZZ)^n, 2n-1)$ is contractible. By the Whitehead theorem it suffices to show that for each sufficiently large $M>0$, the Rips complex $\Rips\left(X, 2n -1\right)$ is contractible for $X=(1/m \cdot \ZZ)^n \cap [-M, M]^n$, as each element of a homotopy group of $\Rips((1/m \cdot \ZZ)^n, 2n-1)$  is contained in a $\Rips(X, 2n-1)$ for a large enough $M$. 

The proof that $\Rips(X, 2n-1)$ is contractible proceeds by induction. Let $x_1, x_2, \ldots, x_\ell$ denote the collection of points of $X = X_0$ arranged in the lexicographical order and define $X_i = X \setminus \{x_1, x_2, \ldots, x_i\}$. We will prove that for each $i \in \{0,1, 2, \ldots, \ell -2\}$, $x_{i+1}$ is locally crushable in $X_i$ at scale $2n-1$. By Theorem \ref{Thm:Crush}, this will imply that $\Rips(X, 2n-1) \simeq \Rips(X_{\ell -1}, 2n-1)$ and is thus contractible as $X_{\ell-1}$ is a single point.

There are two different types of inductive steps to prove that $x_{i+1}$ is locally crushable in $X_i$ at scale $2n-1$.

\begin{enumerate}
 \item The first type is used when the last coordinate of $x_{i+1}$ (namely, $\pi_n(x_{i+1})$) is not maximal within $X$, i.e., $\pi_n(x_{i+1}) < \max_{z\in X_i} \pi_n(z)$. By the definition of the lexicographical order, $X_i$ contains the entire subgrid of $X$ above $x_{i+1}$, namely the non-empty subset
 $$
X'_i=(1/m \cdot \ZZ)^{n} \cap ([-M, M]^{n-1} \times \left[ \pi_n(x_{i+1}) + 1/m,  M \right]).
 $$
 We will now prove that $x_{i+1}$ is locally crushable in $X_i$ with the subset $L_{x_{i+1}}$ (see Definition \ref{Def:LocCrush}) being 
 $$
 L_{x_{i+1}}=\{z \in X_i \mid 1 \geq |\pi_j(z)-\pi_j(x_{i+1})|, \forall j \in \{1, 2, \ldots, n\}; \pi_n(z) > \pi_n(x_{i+1})\}.
 $$
 Observe that $ L_{x_{i+1}}$ is contained in the box $[-1,1]^{n-1} \times [0,1]$ translated by $x_{i+1}$. We refer to this translated box as $B_i$. 
 
 Following the definition of the local crushing, choose a simplex $\tau \in \Rips(X_i, 2n-1)$ containing $x_{i+1}$. By the $\lec$ property there exists $c_\tau \in B_i \cap \bx(\tau): d(y, c_\tau) \leq 2n-1 - \rho$, for each $y \in \tau$. Next, we change each of the coordinate of $c_\tau$ by at most $1/m$ to reach a point $c'_\tau$ of $L_{x_{i+1}}$:
\begin{itemize}
 \item For $j < n$ we snap the $j^{\textrm{th}}$ coordinate of $c_\tau$ (namely, $\pi_j(c_\tau)$) to the closest value of $1/m \cdot \ZZ$ in the direction of $\pi_j(x_{i+1})$. (If $\pi_j(c_\tau)=\pi_j(x_{i+1})$, no change occurs.)
\item We snap the $n^{\textrm{th}}$ coordinate to the closest value of $1/m \cdot \ZZ$ in the direction of $\pi_j(x_{i+1})$. In case that new value is $\pi_j(x_{i+1})$, we have $\pi_n(c_\tau) \in [\pi_j(x_{i+1}), \pi_j(x_{i+1}) + 1/m)$ and we thus snap to $\pi_j(x_{i+1}) + 1/m$ instead.
\end{itemize}
The resulting point is denoted by $c'_\tau$. We first argue that $c'_\tau \in X_i$. By the $\lec$ property, $c_\tau \in \bx(X)$. By the structure of the $X$ (the cubical grid), the above modification $c_\tau \to c'_\tau$ snapping coordinates towards $x_{i+1} \in X$  (or the last coordinate to $\pi_n(x_{i+1}) + 1/m$) thus results in $c'_\tau$ being in $X$. Since $\pi_n(c'_\tau) >  \pi_n(x_{i+1})$, and since $X_i$ contains all the points of $X$ with $n^{\textrm{th}}$ coordinate above $\pi_n(x_{i+1})$ due to the lexicographical order (i.e., it contains $X'_i$), we have $c'_\tau \in X_i$.

Next we argue that the resulting point $c'_\tau$ is in $L_{x_{i+1}}$:
\begin{itemize}
 \item For $j < n$, as $|\pi_j(c_\tau) - \pi_j(x_{i+1})| \leq 1$ due to the $\lec$ property, so is  $|\pi_j(c'_\tau) - \pi_j(x_{i+1})| \leq 1$ as the $j^{\textrm{th}}$ coordinate was moved (if moved) towards $\pi_j(x_{i+1})$. 
 \item In the same spirit we can observe that $\pi_n(c'_\tau) - \pi_n(x_{i+1}) \in (0, 1]$. In particular, this means that $c'_\tau \neq x_i$.
\end{itemize}
Thus $c'_\tau$ is in $L_{x_{i+1}}$. 

Changing each coordinate by at most $1/m$ in the transition $c_\tau \to c'_\tau$ provides a change by at most $n/m$ in the distance. Thus for each $y\in \tau$: 
$$
d(y, c'_\tau) \leq d(y, c_\tau) + d(c_\tau, c'_\tau) \leq (2n-1 - \rho) +n/m \leq 2n -1
$$ 
by the definition of $m$. The locally crushing condition thus holds, with $c'_\tau$ being the point corresponding to $\tau$ according to Definition \ref{Def:LocCrush}. 

\item The second type of the inductive step on the index $i$ uses another induction, that on the dimension $n$. Let $i$ be the first index, for which $\pi_n(X_i)$ is a single point. This means that 
$$
X_i = \left((1/m \cdot \ZZ)^{n-1} \cap [-M, M]^{n-1}\right) \times \{\pi_n(x_{i+1})\}.
$$
By the statement of this theorem for $n-1$, we may inductively assume that a sequence of local crushings induces the homotopy equivalence $\Rips(X_i, 2n-1) \simeq \Rips(X_{\ell-1}, 2n-1)$, because the relevant parameters have been chosen so that they hold for the case $n-1$ as well:
\begin{itemize}
\item We have chosen $\rho$ so that $\lec(n', \rho)$ holds for all $n' \in \{2, 3, \ldots, n\}$.
 \item We have chosen $r \geq  n^2 (2n-1)$ and thus $r \geq  (n-1)^2 (2(n-1)-1)$ also holds.
 \item We have chosen $m \geq n/\rho$ and thus $m \geq (n-1)/\rho$ also holds.
\end{itemize}
\end{enumerate}
Thus $X$ can be transformed to $X_{\ell-1}$ by a sequence of local crushings and thus $\Rips(X, 2n-1) \simeq \Rips(X_{\ell -1}, 2n-1) \simeq \bullet$.
The only exception in this inductive step on dimension is the initial case. If $n-1=2$, then a sequence of local crushings (in fact, they are crushings in this case) inducing the homotopy equivalence $\Rips(X_i, 2n-1) \simeq \Rips(X_{\ell-1}, 2n-1)$ has been provided in the proof of  Theorem \ref{Thm:Rips12}.
\end{proof}

\subsection{Contractibility of integer lattices at small scales}
\label{Sub:2}

The overall strategy of the next result is the same as that of Theorem \ref{Thm:Rips12} (see Figures \ref{Fig1} and \ref{Fig2}) with the incorporation of local crushings. It is also an adaptation of the strategy of Theorem \ref{Thm:Main2}, using explicit local domination steps instead of the ones implied by the $\lec$ property.

\begin{theorem}
 \label{Thm:Main2}
 For each $r \geq 3$, $\Rips(\ZZ^3, r) \simeq \bullet$.
\end{theorem}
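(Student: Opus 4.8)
The plan is to run the same lexicographic peeling scheme as in the proofs of Theorems~\ref{Thm:Rips12} and~\ref{Thm:Main1}, but to replace the $\lec$-supplied crushing centers by explicit ones, since scale $3$ lies below $2n-1=5$ and so the Jung-constant input of Section~\ref{Sect:LEC} is unavailable. First, since $d_1$ on $\ZZ^3$ is integer-valued, $\Rips(\ZZ^3,r)=\Rips(\ZZ^3,\lfloor r\rfloor)$, so it suffices to treat integer $r\ge 3$; and by the Whitehead theorem together with the fact that every homotopy class lies in the Rips complex of finitely many vertices, it suffices to prove $\Rips(X,r)\simeq\bullet$ for $X=(\{0,1,\dots,M\})^3$ (or a large box), for every $M$. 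Order the points of $X=X_0$ by the lexicographic order $\prec$ with last coordinate dominant (as in Section~\ref{Sect:MainContract}) as $x_1\prec\dots\prec x_\ell$, set $X_i=X\setminus\{x_1,\dots,x_i\}$, and show that each $x_{i+1}$ is locally crushable in $X_i$ at scale $r$ for $i\le \ell-2$; by Theorem~\ref{Thm:Crush} this collapses $\Rips(X,r)$ down to $\Rips(\{x_\ell\},r)\simeq\bullet$.

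As in Theorem~\ref{Thm:Main1} there are two types of step. In the \emph{second type} --- once $\pi_3$ is constant on the remaining set --- $X_i$ is a full two-dimensional slab $(\{0,\dots,M\})^2\times\{c\}$, and the remaining reductions are exactly the domination steps of the $n=2$ case of Theorem~\ref{Thm:Rips12}, which are valid already for $r\ge 2$. In the \emph{first type} --- $\pi_3(x_{i+1})$ not maximal, so $X_i$ contains the entire subgrid above $x_{i+1}$ --- I would (after translating $x_{i+1}$ to $0^3$) take the local crushing set $L_{x_{i+1}}$ to be a fixed small set of lattice points one step above $x_{i+1}$, namely $\{x_{i+1}+(\pm1,0,1),\,x_{i+1}+(0,\pm1,1)\}$, which has diameter $2\le r$ and all of whose points lie above $x_{i+1}$, hence in $X_i$. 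The content is then to verify the local crushing condition of Definition~\ref{Def:LocCrush}: for every $A\ni x_{i+1}$ with $\diam(A)\le r$ there is $y\in L_{x_{i+1}}$ with $d(a,y)\le r$ for all $a\in A$. The crucial structural fact is that, by the lexicographic order, $A\setminus\{x_{i+1}\}$ lies in the half-space $\{\pi_3\ge\pi_3(x_{i+1})\}$ and, on the boundary plane $\pi_3=\pi_3(x_{i+1})$, is lexicographically larger than $x_{i+1}$; together with $A\subseteq B(x_{i+1},r)$ and $\diam(A)\le r$ this severely restricts $A$, and the verification reduces to comparing finitely many $d_1$-distances. Concretely, the only points that can obstruct $y=x_{i+1}+(1,0,1)$ are those of the form $x_{i+1}+(a_1,a_2,0)$ with $a_1\le 0$, $a_2\ge 1$, $|a_1|+|a_2|\ge r-1$, while the only points obstructing $y=x_{i+1}+(0,1,1)$ are $x_{i+1}+(a_1,0,0)$ with $a_1\ge r-1$; any one of the former and any one of the latter are at distance $\ge 2(r-1)>r$ for $r\ge 3$, hence cannot both lie in a set of diameter $\le r$, so at least one of these two choices of $y$ always works.

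The main obstacle is precisely this Type I verification: unlike the large-scale setting there is almost no slack, so the explicit $L_{x_{i+1}}$ must be chosen just right and its covering property checked by hand through finitely many distance comparisons --- this is the ``comparison of finitely many distances'' advertised in the abstract, and Remark~\ref{Rem:Rips12} already shows plain domination is not enough. A secondary technical point is boundary behaviour: when $x_{i+1}$ lies near a face of the box $X$, some of the four candidate points fall outside $X$, but in that case the same face constraint confines $A$ to the corresponding half-spaces, so a smaller $L_{x_{i+1}}$ still suffices; this is handled by the same kind of finite casework, analogous in spirit to the $\bx(\tau)\subseteq\bx(X)$ argument in the proof of Theorem~\ref{Thm:Main1}. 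Finally one checks that the parameters align so that, when the Type II step invokes the $n=2$ case, its required scale bound $r\ge 2$ is implied by $r\ge 3$, closing the argument.
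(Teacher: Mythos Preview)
Your proposal is correct and follows the same overall scheme as the paper: reduce to a finite box by Whitehead, peel vertices in lexicographic order, handle the top-slab (Type~II) by the $n=2$ case of Theorem~\ref{Thm:Rips12}, and in the generic (Type~I) step exhibit an explicit local crushing set and verify Definition~\ref{Def:LocCrush} by a finite distance check.

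The one genuine difference is the choice of $L$ and the style of verification. After translating $x_{i+1}$ to $0^3$, the paper takes $L_{0^3}=\{(0,0,1),(1,1,0),(-1,1,0)\}$ (diameter~$3$) and argues directly: if $(0,0,1)$ fails there is a witness $(t_1,t_2,0)$ with $|t_1|+t_2=r$, and a case split on the sign of $t_1$ shows that $(1,1,0)$ or $(-1,1,0)$ then works. You instead take (effectively) $L=\{(1,0,1),(0,1,1)\}$ (diameter~$2$, entirely on the level $\pi_3=1$) and verify by an \emph{obstruction--incompatibility} argument: an obstruction to $(1,0,1)$ must be $(a_1,a_2,0)$ with $a_1\le 0$, $a_2\ge 1$, $|a_1|+a_2\ge r-1$, an obstruction to $(0,1,1)$ must be $(a_1',0,0)$ with $a_1'\ge r-1$, and any such pair is at distance $\ge 2(r-1)>r$. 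This is a clean alternative; the paper's boundary cases (2)--(3) (where $b_1=0$ or $b_2=0$) correspond exactly to your observation that the face constraint eliminates one family of obstructions, so a single dominating vertex suffices. One small caveat: in the corner case $b_1=b_2=0$ none of your four stated points $(\pm1,0,1),(0,\pm1,1)$ with \emph{positive} offset lies in $X$, so ``a smaller $L$'' here really means a different point (e.g.\ $(0,0,1)$, the paper's case~(2)); this is still the promised finite check and goes through.
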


\begin{proof}
As in the proof of Theorems \ref{Thm:Rips12} and \ref{Thm:Main2}, it suffices to show that for arbitrarily large $M>0$ and $Z=\{0, 1, \ldots, M\}^3$, we have $\Rips(Z,r) \simeq \bullet.$ Again, we will proceed by induction, removing points from $Z$ in the lexicographical order. Assume $z_1, z_2, \ldots, z_\ell$ are the points of $Z$ ordered in the lexicographical order, and define $Z_i = Z \setminus \{z_1, z_2, \ldots, z_i\}$. We will prove that for each $i \in \{0,1, 2, \ldots, \ell-2\}$, $z_{i+1}$ is locally crushable in $Z_i$, yielding $\Rips(Z,r) \simeq \Rips(Z_{\ell-1},r)=\bullet.$

Fix $i$. Translating by $-z_{i+1}$ we recenter $Z_i$ so that $z_{i+1}$ corresponds to $0^3$. Thus, there exist integers $a_1, a_2 \leq 0$ and  $b_1, b_2, b_3 \geq 0$ so that $Z_i$ consists of those points in 
$$
\{a_1, a_1 +1, \ldots, b_1\} \times  \{a_2, a_2 +1, \ldots, b_2\} \times \{0, 1, \ldots, b_3\}  
$$
which appear lexicographically after $0^3$. We proceed by case analysis:
\begin{enumerate}
 \item If $i$ is the first index at which $b_3=0$, the grid $Z_i$ is two-dimensional and thus there is a sequence of local crushings inducing $\Rips(Z_i,r)\simeq \bullet$ by Theorem \ref{Thm:Rips12}.
 \item If $b_3 \neq 0$ but $b_1=b_2 = 0$, then all the points of $Z_i$ except for $0^3$ have the last coordinate at least $1$, hence $0^3$ is dominated in $\Rips(Z_i,3)$ by $(0,0,1)$. The technical reason is that for each $w \in Z_i, w \neq 0^3$ we have $d(w,(0,0,1)) = d(w,0^3)-1$.
 \item Similarly, if $b_2, b_3 \neq 0$ but $b_1=0$, then $0^3$ is dominated in $\Rips(Z_i,3)$ by $(0,1,1)$. The technical reason is that for each $w \in Z_i$ we have $d(w,(0,1,1)) \leq d(w,0^3)$. This is easy to see because any point in $Z_i$ other than $0^3$ has either the second or the third coordinate non-zero and is thus at distance at most $r-1$ from either $(0,1,0)$ or $(0,0,1)$. The same argument was used in the proof of Theorem \ref{Thm:Rips12}, case $n=2$, item (1).
 Analogously,  if $b_1, b_3 \neq 0$ but  $b_2=0$, then $0^3$ is dominated in $\Rips(Z_i,3)$ by $(1,0,1)$.
 \item Assume $b_1, b_2, b_3 \neq 0$. We will prove that the local crushing condition holds for $0^3$ with $L_{0^3}=\{(0,0,1), (1,1,0), (-1,1,0)\},$ which is of diameter $3$. Assume $\tau \in \Rips(Z_i, r)$ contains $0^3$. If $\tau \cup \{(0,0,1)\}$ is not a simplex, then there exists $(t_1, t_2,0)\in \tau: |t_1| + t_2 = r$ (note that $t_2  \geq 0$ due to the lexicographical order). 
\begin{itemize}
	\item First assume $t_1 \geq 0$. Take any $(s_1, s_2, s_3)\in \tau$ different than $0^3$. As $d((t_1, t_2,0), (s_1, s_2, s_3)) \leq r$, we can't have both $s_1$ and $s_2$ equal to $0$. In particular, either $s_2> 0$ or $s_1 > 0$. This means that $(s_1, s_2, s_3)$ is either at distance at most $r-1$ from $(0,1,0)$ or $(1,0,0)$ respectively. In both cases we can conclude $d((s_1, s_2, s_3), (1,1,0)) \leq r$ as in (3) above. This implies $\tau \cup \{(1,1,0)\}\in \Rips(Z_i, r)$.
	\item The case where $t_1 \leq 0$ is handled analogously with the conclusion that $\tau \cup \{(-1,1,0)\}\in \Rips(Z_i, r)$.
\end{itemize} 
   These two cases conclude the proof of local crushing.
\end{enumerate}
\end{proof}

\section{Conclusion}
\label{Sect:Conclusion}

We conclude with two comments on our results. 

First, \textbf{the proof of Theorem \ref{Thm:Main1} actually holds for any $d_p$ metric}. For finite $p \geq 1$, $d_p$ is defined as
$$
d_p\left( (a_1, a_2, \ldots, a_n), (b_1, b_2, \ldots, b_n) \right) = \left(\sum_{i=1}^n |a_i - b_i|^{p}\right)^{1/p},
$$
while 
$$
d_\infty\left( (a_1, a_2, \ldots, a_n), (b_1, b_2, \ldots, b_n) \right) = \max_{i=1, 2, \ldots, n} |a_i - b_i|.
$$
There are only few places in the proof of Theorem \ref{Thm:Main1} that actually use specifics of $d_1$. We next discuss them and comment on why the same argument holds for any $d_p$ metric with $p \in [1,\infty]$. 
\begin{enumerate}
 \item The proof makes use of $ L_{x_{i+1}}$, which is contained in the box $[-1,1]^{n-1} \times [0,1]$. The diameter of this box is $2n-1$ in $d_1$ (see Remark \ref{Rem:Box}) and strictly less in other $d_p$ metrics. Hence the estimates of the proof hold for any other $d_p$ metric. 
 \item Another property that is used is the fact that the center of the minimal ball in $(\RR^n, d_1)$ containing $A$ lies in $\bx(A)$. The same argument that is given just before Definition \ref{Def:Jung} implies that the same holds in any $d_p$ for $p\in [1,\infty)$. A special case is $d_\infty$: there a center is not unique, but it is easy to see (again, by the same argument that is given just before Definition \ref{Def:Jung}) that it can be chosen in $\bx(A)$.
 \item All the other metric estimates in the proof hold for any $d_p$ as they only use the triangle inequality.
 \item The bound of (1) of Theorem \ref{Thm:JungBounds} holds for ant $d_p$ metric by the same references (\cite{Bohn}, \cite[Proposition 2.12]{Amir}) and consequently so does the auxiliary Lemma \ref{Lem:Main}.\end{enumerate}
 The smaller diameter of the mentioned box in (1), and tighter bounds on the Jung constant  in (4) can be used to deduce better bounds of the main result for $d_p$ metrics. 
 
 The second comment is related to the following \textbf{question: Is the Rips complex } $\Rips((\ZZ^n, d_1),r)$ \textbf{ contractible for each} $r \geq n$? We conjecture that the same proof strategy as that in the proof of Theorem \ref{Thm:Main2} should work. Unfortunately, we failed to noticed a general pattern in the case analysis of the said proof, that would extend to higher dimensions. However, it is clear that for each fixed dimension $n$, such a case analysis only needs to consider finitely many cases, hence specific low dimensions may be amenable to computational verification. Concerning the lower bound for the scale parameter, it is easy to see that for $r<n$, the complex $\Rips((\ZZ^n, d_1),r)$ is not contractible: according to \cite{AVCubes} the Rips complex  $\Rips\left((\{0,1\}^n, d_1),r\right)$ has non-trivial homology, and since there is a contraction ($1$-Lipschitz retraction) $\ZZ^n \to \{0,1\}^n$, the inclusion $ \{0,1\}^n \hookrightarrow \ZZ^n$ induces an injection on the homology of Rips complexes at each scale $r$, see \cite{ZVContractions}.

\section{Acknowledgments}

The author would like to thank Henry Adams, Arseniy Akopyan, and Matthew Zaremsky for fruitful discussions on the subject. 
The author would also like to thank the referee for a very careful reading and numerous helpful comments, which improved the disposition of the paper. The author would also like to thank Samir Shukla for pointing out a gap in the journal version of the proof of Theorem 5.2. The gap has been fixed in this version.
The author was supported by the Slovenian Research Agency grants No. J1-4001 and P1-0292.


\end{document}